\newtheorem{theorem}{Theorem}[section]
\newtheorem{corollary}[theorem]{Corollary}
\theoremstyle{definition}
\newtheorem{example}[theorem]{Example}
\theoremstyle{remark}
\newtheorem{remark}[theorem]{Remark}
\newtheorem{acknowledgment}[theorem]{Acknowledgment}
\numberwithin{equation}{section}
\begin{document}

\title{The spectrum of the restriction to an invariant subspace}
\author{Dimosthenis Drivaliaris}
\address{Department of Financial and Management Engineering\\
University of the Aegean\\
Kountouriotou 41, 82100 Chios, Greece}
\email{d.drivaliaris@fme.aegean.gr}
\author{Nikos Yannakakis}
\address{Department of Mathematics\\
National Technical University of Athens\\
Iroon Polytexneiou 9, 15780 Zografou, Greece}
\email{nyian@math.ntua.gr}

\keywords{spectrum; invariant subspace}

\subjclass{47A10, 47A15}

\begin{abstract}
Let $X$ be a Banach space, $A\in B(X)$ and $M$ be  an invariant subspace of $A$. We present an alternative proof that, if the spectrum of the restriction of $A$ to $M$ contains a point that is in any given hole in the spectrum of $A$, then the entire hole is in the spectrum of the restriction.
\end{abstract}

\maketitle

Let $X$ be a Banach space and $A\in B(X)$. If $M$ is a closed invariant subspace of $A$ then, as is well known, the spectrum of the restriction $A|_M$ may differ a lot from the spectrum of $A$. The following example is very characteristic.
\begin{example}
Let $X=l^2(\mathbb Z)$ and $A$ be the bilateral shift. Then the spectrum of $A$ is the unit circle. If  $M$ is the subspace of sequences  whose terms are zero for all negative integers, then the spectrum of $A|_M$ is the unit disk.
\end{example}
Hence the spectrum of the restriction may fill  possible holes in $\sigma(A)$. Recall that a hole in a compact set of the complex plane is a bounded connected component of its complement.

As was proved by J. Scroggs in \cite{scroggs} a more precise result holds: if one point of the hole lies in $\sigma(A|_M)$, then the entire hole must lie in $\sigma(A|_M)$.

This result together with the fact that the spectrum of the restriction does not intersect the unbounded connected component of $\rho(A)$ (see \cite[Corollary 4.1]{scroggs}) were obtained earlier by J. Bram in \cite{bram}, for the particular case of a normal operator in a Hilbert space. Note that this was a refinement of P. Halmos' spectral inclusion relation that  if $A$ is the minimal  normal extension of the subnormal operator $B$, then $\sigma (A)\subseteq \sigma(B)$  (see  \cite[Problem 200]{halmos}).

Our aim in this short paper is to present an alternative proof of this interesting and surprising result.
\begin{theorem}
Let $M$ be a closed invariant subspace of the bounded linear operator $A$ and $D$ be a connected component of  $\rho(A)$. If $D\cap \sigma(A|_M)\neq\emptyset$, then
\[D\subseteq \sigma (A|_M )\,.\]
\end{theorem}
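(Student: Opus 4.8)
The plan is to convert the statement into a connectedness argument on $D$, the engine being a clean description of $\rho(A|_M)\cap\rho(A)$ in terms of how the resolvent $R(\lambda)=(\lambda-A)^{-1}$ acts on $M$. Throughout write $R(\lambda)=(\lambda-A)^{-1}$ for $\lambda\in\rho(A)$.

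First I would prove the following lemma: for $\lambda\in\rho(A)$ one has $\lambda\in\rho(A|_M)$ if and only if $R(\lambda)(M)\subseteq M$. Since $M$ is $A$-invariant, $(\lambda-A)(M)\subseteq M$, and since $\lambda-A$ is injective on $X$, its restriction $(\lambda-A)|_M$ is injective on $M$; hence $\lambda\in\rho(A|_M)$ is equivalent to $(\lambda-A)|_M$ being onto, i.e.\ to $(\lambda-A)(M)=M$. Applying $R(\lambda)$ to this equality yields $R(\lambda)(M)=M$, so in particular $R(\lambda)(M)\subseteq M$; conversely, if $R(\lambda)(M)\subseteq M$ then $M=(\lambda-A)R(\lambda)(M)\subseteq(\lambda-A)(M)\subseteq M$, whence $(\lambda-A)(M)=M$. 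This proves the lemma.

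Consequently $D\cap\sigma(A|_M)=\{\lambda\in D:\ R(\lambda)(M)\not\subseteq M\}$ and $S:=D\cap\rho(A|_M)=\{\lambda\in D:\ R(\lambda)(M)\subseteq M\}$, and these two sets partition $D$. I would then check that each is relatively closed in $D$. For $D\cap\sigma(A|_M)$ this is immediate, as $\sigma(A|_M)$ is closed. For $S$ I would use that $\lambda\mapsto R(\lambda)$ is norm-continuous on $\rho(A)$: if $\lambda_n\in S$ and $\lambda_n\to\lambda_0\in D$, then for each $m\in M$ we have $R(\lambda_n)m\in M$ and $R(\lambda_n)m\to R(\lambda_0)m$, so $R(\lambda_0)m\in M$ because $M$ is closed; hence $\lambda_0\in S$. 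Thus $D$ is a disjoint union of two relatively closed subsets; since $D$ is connected and $D\cap\sigma(A|_M)\neq\emptyset$ by hypothesis, $S$ must be empty, i.e.\ $D\subseteq\sigma(A|_M)$.

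The one genuinely delicate point — and the place where a careless argument would break — is the lemma: invariance of $M$ under $A$ does \emph{not} by itself make $M$ invariant under $R(\lambda)$ (the bilateral shift shows this fails precisely inside the hole), so the invariance of $M$ under $R(\lambda)$ has to be extracted exactly from surjectivity of $(\lambda-A)|_M$. Everything else is standard spectral topology together with continuity of the resolvent. As a byproduct, applying the theorem to the unbounded component of $\rho(A)$ recovers Scroggs' observation that $\sigma(A|_M)$ cannot meet it, since $\sigma(A|_M)$ is bounded.
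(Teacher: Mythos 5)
Your proof is correct, but it follows a genuinely different route from the paper. The paper argues quantitatively along a path: it picks $a\in D\cap\sigma(A|_M)$ and a hypothetical $b\in D\cap\rho(A|_M)$, joins them by a rectifiable arc $C\subseteq D$, uses compactness of $C$ and $C\subseteq\rho(A)$ to get a uniform lower bound $\|Ax-\lambda x\|\geq c\|x\|$ for $x\in M$, $\lambda\in C$, concludes $\|(A|_M-\lambda)^{-1}\|\leq 1/c$ wherever it exists on $C$, and then chains along subarcs of length less than $c$ to propagate membership in $\rho(A|_M)$ from $b$ to $a$, a contradiction. You instead prove the structural lemma that for $\lambda\in\rho(A)$ one has $\lambda\in\rho(A|_M)$ if and only if $R(\lambda)(M)\subseteq M$ (your argument for this is sound; note the bounded inverse comes for free since it is $R(\lambda)|_M$, so no appeal to the open mapping theorem is even needed), and then run a clopen argument: $D\cap\sigma(A|_M)$ is relatively closed because $\sigma(A|_M)$ is closed, while $D\cap\rho(A|_M)$ is relatively closed by norm-continuity of $\lambda\mapsto R(\lambda)$ on $\rho(A)$ together with closedness of $M$, so connectedness of $D$ forces one piece to be empty. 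What each buys: your lemma gives extra information (an explicit description of which points of $\rho(A)$ survive into $\rho(A|_M)$, namely those whose resolvent leaves $M$ invariant) and avoids the arc-subdivision bookkeeping; the paper's argument uses only the lower-bound inclusion $\sigma_{app}(A|_M)\subseteq\sigma_{app}(A)$, which is why (as noted in Remark (ii) and (v)) it transfers verbatim to any pair of operators with related approximate point spectra and to densely defined closed operators, whereas your lemma is tied to the restriction-to-an-invariant-subspace setting. Your closing observation that the unbounded component case follows because $\sigma(A|_M)$ is bounded matches the paper's Corollary.
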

\begin{proof}
Let $D$ be  a connected component of the resolvent $\rho(A)$ and  $a\in D\cap \sigma(A|_M)$.  Assume that there exists $b\in D$ with $b\in\rho(A|_M)$ and  let $C$ be any (continuous) rectifiable  path that lies in $D$  and connects $a$ and $b$.

First we show that there exists $c>0$ such that
\begin{equation}
\label{path}
\|Ax-\lambda x\|\geq c\|x\|\,,  \text{ for all }\lambda\in C \text{ and }x\in M\,.
\end{equation}
Assume the contrary;  i.e. that there exists a sequence $(\lambda_n)$ in $C$ and a sequence $(x_n)$ in $M$, with $\|x_n\|=1$, such that $\|Ax_n-\lambda_n x_n\|\rightarrow 0$, as $n\rightarrow\infty$. Then since $(\lambda_n)$ is bounded it has a subsequence, which for simplicity we denote again by $(\lambda_n)$, that converges to some $\lambda_0\in C$ (note that $C$ is the range of a continuous function and hence it is closed). But then
\[\|Ax_n-\lambda_0x_n\|\leq \|Ax_n-\lambda_nx_n\|+|\lambda_n-\lambda_0|\]
and hence $\|Ax_n-\lambda_0x_n\|\rightarrow 0$, as $n\rightarrow\infty$, which is a contradiction since $\lambda_0\in\rho(A)$.

As one may easily see inequality (\ref{path}) implies that the resolvent function 
\[R_\lambda=(A|_M-\lambda I_M)^{-1}\,,\] of the restriction $A|_M$,   is bounded on $C\cap\rho(A|_M)$ and in particular 
\[\|R_\lambda\|\leq \frac{1}{c}\,,\text{ for all  }\lambda\in C\cap\rho(A|_M)\,.\]
Hence, by the elementary properties of the resolvent function we have  
$\lambda\in\rho(A|_M)$, for all 
\[|b-\lambda|<c\leq\frac{1}{\|R_b\|}\,.\]

Since $c$ is independent of $\lambda$ the above argument shows that if two $\lambda$'s that belong to $C$ are within $c$ of each other and one is  in the resolvent set of the restriction then the other is also  in the resolvent set of the restriction. Therefore, if we divide the arc  $C$ into subarcs of length less than $c$, then the subarc containing $b$ has the other endpoint in the resolvent set of the restriction, and so on. The last subarc contains $a$ and so $a$ is also in the resolvent set of the restriction which is a contradiction. Hence
\[D\subseteq \sigma(A|_M)\,.\]
\end{proof}
\begin{remark}
(i) In his proof  J. Scroggs \cite[Theorem 4]{scroggs} uses the analyticity of the resolvent function in the connected components of $\rho(A)$ and obtains the result by using the uniqueness theorem for analytic functions  (see also the book of H. Dowson \cite[Theorem 1.29]{dowson1}). On the other hand the proof that is presented in  the book of J. Conway \cite[Theorem II.2.11 (c)]{conway} and is attributed to S. Parrott, uses the fact that the boundary of the spectrum is contained in the approximate point spectrum. 
It seems to us that our approach is simpler and in a sense more direct: it only depends on the fact that ``good'' and ``bad'' points in the same hole (and in the unbounded component) are continuously linked and thus they cannot coexist.

(ii)  P. Halmos notes  in \cite[Problem 201, p. 308]{halmos} that S. Parrott's  proof depends on the obvious  spectral inclusion
\begin{equation}
\label{halmos}
\nonumber
\sigma_{app}(A|_M)\subseteq\sigma_{app}(A)
\end{equation}
only and the conclusion holds for any pair of operators $A$ and $B$ whenever their approximate spectra are so related. Our proof may be easily adapted to this more general situation.

(iii) J. Bram  in his proof \cite[Theorem 4]{bram} for normal operators in a Hilbert space, also uses the analyticity of the resolvent function together with  the spectral theorem. An alternative proof for this case, but without using complex analysis, is given by I. Ito in \cite[Theorem 8]{ito}.

(iv) Analogous results have been obtained by H. Dowson in \cite[Theorem 1]{dowson} for operators induced on quotient spaces and by M. Putinar in \cite[Corollary 2.6]{putinar} for hyponormal operators.

(v) Our proof, without significant changes, may be adapted to prove the corresponding result for densely defined closed linear operators. A proof of this result, using the fact that the boundary of the spectrum is contained in the approximate point spectrum, was given by J. Stochel and F. Szafraniec in \cite[Theorem 2]{stochel}.
\end{remark}

An immediate corollary of the theorem  is that filling the holes in $\sigma(A)$, as in the example, is the only possibility for $\sigma(A|_M)$.
\begin{corollary}
\label{unbounded}
If $M$ is a closed invariant subspace of $A$ and $U$ is the unbounded connected component of $\rho(A)$, then
\[\sigma(A|_M)\cap U=\emptyset\,.\]
\end{corollary}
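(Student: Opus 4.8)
The plan is to read the corollary straight off the Theorem, using nothing beyond the boundedness of $\sigma(A|_M)$. Before applying the Theorem, I would note that its statement and proof place no restriction on \emph{which} connected component $D$ of $\rho(A)$ is considered; in particular it applies to the unbounded component $U$. Indeed, the proof uses only that any two points of $D$ can be joined by a rectifiable path lying in $D$, and such a path, being the continuous image of a compact interval, is automatically bounded — which is precisely what the compactness argument for the sequence $(\lambda_n)$ needs. So the first step is simply to record this observation, which legitimises taking $D=U$ in the Theorem.

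Then I would argue by contradiction. Suppose $\sigma(A|_M)\cap U\neq\emptyset$. Since $M$ is a closed subspace of the Banach space $X$, it is itself a Banach space and $A|_M\in B(M)$, so the usual Neumann series argument gives $\sigma(A|_M)\subseteq\{\lambda\in\mathbb{C}:|\lambda|\leq\|A|_M\|\}$, which is bounded. On the other hand, applying the Theorem with $D=U$ yields $U\subseteq\sigma(A|_M)$, and $U$ is unbounded — a contradiction. Hence $\sigma(A|_M)\cap U=\emptyset$.

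There is essentially no obstacle here; the only point deserving a moment's attention is that one is entitled to invoke the Theorem for the unbounded component, and this is immediate from its formulation. That is exactly why the statement is labelled an immediate corollary.
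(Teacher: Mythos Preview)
Your argument is correct and is exactly the immediate deduction the paper has in mind: apply the Theorem with $D=U$ and observe that $U\subseteq\sigma(A|_M)$ would contradict the boundedness of $\sigma(A|_M)$. Your remark that the Theorem's proof goes through verbatim for the unbounded component (because the path $C$ is compact regardless) is a nice clarification, but the paper's statement of the Theorem already covers any connected component of $\rho(A)$, so no extra justification is strictly needed.
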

\begin{remark}
(i) This corollary  may be found in J. Scroggs' paper \cite[Corollary 4.1]{scroggs}. The proof that is presented in the book of H. Radjavi and P. Rosenthal \cite[Theorem 0.8]{radjavi} is attributed in \cite[p. 10]{radjavi} and in \cite[Lemma 2]{crimmins} to S. Parrott. As we have already mentioned an earlier proof, for normal operators, was given by J. Bram in \cite[Theorem 3]{bram}.\\
(ii) In view of the conclusion of the corollary a natural question that arises is which holes in the spectrum can be filled? A  solution to this much harder problem, for the case of normal operators in a Hilbert space, was  given by J. Conway and P. Olin  in \cite[Theorem 9.2]{conway1}.
\end{remark}
\begin{acknowledgment}
We would like to thank Prof. P. Rosenthal for his useful remarks and suggestions that have improved both the content and the presentation of this paper.
\end{acknowledgment}


\end{document}